\newcommand\version{December 05, 2008}
\newtheorem{theorem}{Theorem}%[section] --- Numeriert ohne Abschnittsnummer
\newtheorem{lemma}[theorem]{Lemma}
\theoremstyle{definition}
\theoremstyle{remark}
\renewcommand{\epsilon}{\varepsilon}
\newcommand{\N}{\mathbb{N}}
\newcommand{\R}{\mathbb{R}}
\newcommand{\Sph}{\mathbb{S}}
\begin{document}

\title[Low energy scattering --- \version]{A note on low energy scattering\\ for homogeneous long range potentials}

\author{Rupert L. Frank}
\address{Rupert L. Frank, Department of Mathematics,
Princeton University, Washington Road, Princeton, NJ 08544, USA}
\email{rlfrank@math.princeton.edu}

\thanks{\copyright\, 2008 by the author. This paper may be reproduced, in its entirety, for non-commercial purposes.}

\iffalse
\begin{abstract}
...
\end{abstract}
\fi
\maketitle

%%%%%%%%%%%%%%%%%%%%%%%%%%%%%%%%%%%%%%%%%%%%%%%%%%%%%%%%%%%%%%%%%%%%%%%%%%%%%%%%%%%%%%%%%%

The very interesting, recent paper \cite{DeSk2} is concerned with scattering at zero energy for long-range potentials. It is shown that if $V=-\alpha |x|^{-\mu}+W$ for some $\alpha>0$, $0<\mu<2$ and a sufficiently fast decaying, radial function $W$, then the scattering matrix for the pair $(-\Delta +V,-\Delta)$ at zero energy is given by
$$
\exp\left( -i \tfrac{\pi\mu}{2-\mu}\sqrt{-\Delta_{\Sph^{d-1}} + \left(\tfrac{d-2}2\right)^2}\right) + K
$$
for some compact operator $K$ (depending on $V$). Moreover, it is conjectured that if $V=-\alpha |x|^{-\mu}$ (that is, if $W\equiv 0$), then $K\equiv 0$. The purpose of this note is to prove this conjecture.

\begin{theorem}\label{sm}
Let $d\geq 2$, $0<\mu<2$ and $\alpha>0$. For $\lambda>0$ let $S(\lambda)$ be the scattering matrix for the pair $(-\Delta -\alpha|x|^{-\mu},-\Delta)$ at energy $\lambda$ in the sense of \cite{DeSk1}. Then
$$
\mathrm{s}-\lim_{\lambda\to 0+} S(\lambda) = \exp\left( -i \tfrac{\pi\mu}{2-\mu}\sqrt{-\Delta_{\Sph^{d-1}} + \left(\tfrac{d-2}2\right)^2}\right) \,.
$$
\end{theorem}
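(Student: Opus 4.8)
The plan is to combine the rotational symmetry with the exact scaling covariance of the homogeneous problem, reducing the statement to the $\lambda\to0+$ behaviour of a single radial phase shift, which is governed by an explicitly solvable zero-energy equation. \emph{Angular reduction.} Since $H=-\Delta-\alpha|x|^{-\mu}$ commutes with $O(d)$, so do the (modified) wave operators and hence $S(\lambda)$; as the space $\mathcal H_\ell$ of spherical harmonics of degree $\ell$ is $O(d)$-irreducible, Schur's lemma gives that $S(\lambda)$ acts on $\mathcal H_\ell$ as a scalar $S_\ell(\lambda)=e^{2i\delta_\ell(\lambda)}$, while $-\Delta_{\Sph^{d-1}}$ equals $\ell(\ell+d-2)$ there. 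Writing $\nu_\ell:=\ell+\tfrac{d-2}2$ one has $\sqrt{\ell(\ell+d-2)+(\tfrac{d-2}2)^2}=\nu_\ell$, so the asserted limit operator acts on $\mathcal H_\ell$ by $\exp(-i\tfrac{\pi\mu}{2-\mu}\nu_\ell)$. As the $S(\lambda)$ are unitary (hence uniformly bounded) and $\bigoplus_\ell\mathcal H_\ell$ is dense, strong convergence follows once I show $S_\ell(\lambda)\to\exp(-i\tfrac{\pi\mu}{2-\mu}\nu_\ell)$ for each fixed $\ell$.

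\emph{The radial equation and its zero-energy solution.} On $\mathcal H_\ell$ the substitution $u=r^{(d-1)/2}\psi$ turns the eigenvalue equation into $-u''+\tfrac{\nu_\ell^2-1/4}{r^2}u-\alpha r^{-\mu}u=\lambda u$ on $(0,\infty)$, and $\delta_\ell(\lambda)$ is the phase shift attached, in the normalization of \cite{DeSk1}, to the solution regular ($\sim r^{\nu_\ell+1/2}$) at the origin. Two structural facts drive the computation. First, the equation is scale covariant: the dilation $r\mapsto\lambda^{-1/2}r$ sends the $(\alpha,\lambda)$-problem to the $(\beta,1)$-problem with $\beta=\alpha\lambda^{-(2-\mu)/2}$, so $\delta_\ell(\lambda)$ depends only on the invariant coupling $\beta$, and $\lambda\to0+$ is precisely the strong-coupling regime $\beta\to\infty$. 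Second, \emph{at} $\lambda=0$ the equation is exactly solvable: setting $u=r^{1/2}g(\xi)$ with $\xi=\tfrac{2\sqrt\alpha}{2-\mu}r^{(2-\mu)/2}$ converts it into Bessel's equation of order $m_\ell:=\tfrac{2\nu_\ell}{2-\mu}$, so the regular zero-energy solution is $u_0(r)=r^{1/2}J_{m_\ell}(\xi)$, with oscillatory asymptotics $u_0(r)\sim c\,r^{\mu/4}\cos\!\big(\xi-\tfrac{m_\ell\pi}{2}-\tfrac\pi4\big)$ as $r\to\infty$.

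\emph{Extracting the limiting phase.} The crux is that the interacting zero-energy solution is a Bessel function of order $m_\ell$, whereas the free radial channel is a Bessel function of order $\nu_\ell$; their asymptotic phases $-\tfrac{m_\ell\pi}2$ and $-\tfrac{\nu_\ell\pi}2$ differ by exactly $-\tfrac\pi2(m_\ell-\nu_\ell)=-\tfrac{\pi\mu}{2(2-\mu)}\nu_\ell$. Concretely, for small $\lambda>0$ there is a transition radius $r_*\sim(\alpha/\lambda)^{1/\mu}$: for $r\ll r_*$ the energy is negligible and the regular solution is well approximated by $u_0$, while for $r\gg r_*$ the potential is negligible and the solution is a free combination $r^{1/2}[A\,J_{\nu_\ell}(\sqrt\lambda\,r)+B\,J_{-\nu_\ell}(\sqrt\lambda\,r)]$. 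Since both Bessel arguments tend to $\infty$ at $r=r_*$ as $\lambda\to0$, the two representations overlap in their oscillatory regimes, and matching the phases there determines $A,B$, hence $S_\ell(\lambda)$. Carried to the limit this gives $2\delta_\ell(\lambda)\to-\pi(m_\ell-\nu_\ell)=-\tfrac{\pi\mu}{2-\mu}\nu_\ell$, i.e.\ $S_\ell(\lambda)\to\exp(-i\tfrac{\pi\mu}{2-\mu}\nu_\ell)$, as required.

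\emph{Main obstacle.} The delicate point is to make this matching rigorous and uniform as $\lambda\to0+$: one must control the solution across the transition region $r\sim r_*$, where neither model equation is accurate, with errors that vanish in the limit, and—most importantly—reconcile the phase bookkeeping with the long-range renormalization implicit in the definition of $S(\lambda)$ in \cite{DeSk1}. Because $\mu<2$ the naive phase shift diverges; the \cite{DeSk1} normalization subtracts an $\ell$-independent scalar phase, and I must verify that this subtraction removes exactly the common divergent part while leaving the $\ell$-dependent Bessel-order difference above, introducing no spurious $\ell$-independent constant. Establishing this—for instance through the explicit zero-energy Green's function built from $r^{1/2}J_{\pm m_\ell}(\xi)$ together with a limiting absorption argument—is the heart of the proof.
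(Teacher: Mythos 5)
Your reduction to fixed angular momentum and your exact zero-energy solution reproduce precisely the computational core of the paper: the substitution $u=r^{1/2}g(\xi)$ with $\xi=\tfrac{2\sqrt\alpha}{2-\mu}r^{(2-\mu)/2}$ turns the zero-energy radial equation into Bessel's equation of order $m_\ell=2\nu_\ell/(2-\mu)$, regularity at the origin selects $J_{m_\ell}$, and its large-$\xi$ phase yields $-\tfrac{\pi\mu}{2-\mu}\nu_\ell$ after the bookkeeping --- this is exactly the paper's Lemma. The genuine gap lies in everything connecting that zero-energy phase to $\mathrm{s}-\lim_{\lambda\to0+}S(\lambda)$, and you name it yourself: the existence of the limit, the matching across the transition region $r\sim r_*$, and above all the verification that the long-range renormalization implicit in the definition of $S(\lambda)$ in \cite{DeSk1} subtracts exactly the common divergent phase and introduces no $\ell$-dependent remainder are all deferred to a final paragraph labelled ``the heart of the proof'' and never carried out. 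This is not routine: for $0<\mu\le1$ the solutions are never asymptotically of the form $r^{1/2}[A\,J_{\nu_\ell}(\sqrt\lambda\,r)+B\,J_{-\nu_\ell}(\sqrt\lambda\,r)]$ (the long-range tail forces an $r$-dependent modified phase), the naive phase shifts $\delta_\ell(\lambda)$ diverge as $\lambda\to0+$ for every $0<\mu<2$, and uniform control of the solution through the region where neither the zero-energy Bessel model nor the free model is accurate requires genuine analysis.

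The paper avoids all of this by quoting \cite{DeSk1} --- the very reference through which the theorem's $S(\lambda)$ is defined --- for precisely the input you are missing: it is proved there that $S(0):=\mathrm{s}-\lim_{\lambda\to0+}S(\lambda)$ exists and acts on spherical harmonics of order $l$ as $\exp\bigl(i2\bigl(D_l+\tfrac\pi4(d-3+2l)\bigr)\bigr)$, where $D_l$ is the asymptotic phase of the regular zero-energy solution. Granting that, the only new content needed is the explicit Bessel evaluation of $D_l$, i.e.\ your second paragraph. So either invoke that result of \cite{DeSk1}, in which case your argument collapses to the paper's proof, or actually execute the matching-plus-renormalization analysis you sketch; as written, the proposal asserts rather than proves the step on which the theorem rests.
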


It was shown in \cite{DeSk1} that $S(0):=\mathrm{s}-\lim_{\lambda\to 0+} S(\lambda)$ exists and is given through the phase shifts of the solutions of the energy zero equations corresponding to fixed angular momentum. To be more precise, for any $l\in\N_0$ there exists a function $f$ satisfying
\begin{align*}
-f''+\left( \frac{(l-1+d/2)^2 -1/4}{r^2} - \frac{\alpha}{r^\mu}\right) f & =0
\qquad \text{in}\  \R_+ \,, \\
\lim_{r\to0} r^{-l-\tfrac{d-1}2} f(r) & = 1 \,, \\
\lim_{r\to\infty} \left( r^{-\mu/4} f(r) -  C_l \sin\left(\frac{2\sqrt\alpha}{2-\mu} \ r^{\frac{2-\mu}2} + D_l \right) \right) & = 0
\end{align*}
for some $C_l>0$, and in terms of this function the action of $S(0)$ on a spherical harmonic $Y$ of order $l$ is given by
$$
S(0) Y = \exp\left(i2\left(D_l + \tfrac\pi 4 (d-3+2l)\right)\right) Y \,.
$$
(Note that the $2\pi$-ambiguity in the definition of $D_l$ does not affect the formula for $S(0)$.) In view of these facts and recalling that the Laplacian acts on spherical harmonics of order $l$ as multiplication by $l(l+d-2)$, Theorem \ref{sm} will follow if we can prove that $D_l=-\frac{\pi (d-2+2l)}{2(2-\mu)} + \frac\pi4$ modulo $2\pi$. This equality is the assertion of the following lemma applied to $\nu=l+\tfrac{d-2}2$.

\begin{lemma}
 Let $\nu\geq 0$, $0<\mu<2$ and $\alpha>0$. The function
\begin{equation}
\label{eq:sol}
f(r) := \Gamma(\tfrac{2\nu}{2-\mu}+1) \left(\frac{2-\mu}{\sqrt\alpha} \right)^{\tfrac{2\nu}{2-\mu}}\ r^{1/2} \ J_{\frac{2\nu}{2-\mu}} ( \tfrac{2\sqrt\alpha}{2-\mu} \ r^{\frac{2-\mu}2} )
\end{equation}
satisfies
\begin{align}
 \label{eq:eq}
-f''+\left(\frac{\nu^2-1/4}{r^2} - \frac{\alpha}{r^\mu}\right) f & =0
\qquad \text{in}\  \R_+ \,, \\
\label{eq:bc}
\lim_{r\to0} r^{-\nu-1/2} f(r) & = 1 \,, \\
\label{eq:asymp}
\lim_{r\to\infty} \left( r^{-\mu/4} f(r) -  C \sin\left(\frac{2\sqrt\alpha}{2-\mu} \ r^{\frac{2-\mu}2} -\frac{\pi\nu}{2-\mu} + \frac\pi4 \right) \right) & = 0
\end{align}
with $C := \pi^{-1/2} \Gamma(\tfrac{2\nu}{2-\mu}+1) \left((2-\nu)/\sqrt\alpha \right)^{\tfrac{2\nu}{2-\mu}+\tfrac12} >0$.
\end{lemma}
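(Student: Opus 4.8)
The plan is to recognize $f$ as a rescaled Bessel function and to read off all three assertions from the classical small- and large-argument expansions of $J_p$. Concretely, I would introduce the variable $z=\tfrac{2\sqrt\alpha}{2-\mu}\,r^{(2-\mu)/2}$ and the order $p=\tfrac{2\nu}{2-\mu}$, so that, up to the explicit constant prefactor, $f(r)=\const r^{1/2}J_p(z)$. With the abbreviations $a=\tfrac12$, $c=\tfrac{2-\mu}2$ and $b=\tfrac{2\sqrt\alpha}{2-\mu}$ one has $z=br^c$ and $p=\nu/c$, and the whole proof reduces to bookkeeping of powers, phases, and constants.

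For the ODE \eqref{eq:eq} I would invoke the standard fact that, for any Bessel function $Z_p$ of order $p$, the function $r^a Z_p(br^c)$ solves
$$u''+\frac{1-2a}{r}\,u'+\left[(bc)^2 r^{2c-2}+\frac{a^2-c^2p^2}{r^2}\right]u=0.$$
The choice $a=\tfrac12$ kills the first-order term, while $2c-2=-\mu$ together with $(bc)^2=\alpha$ produces the term $-\alpha r^{-\mu}$, and $a^2-c^2p^2=\tfrac14-\nu^2$ produces the centrifugal term; thus this is exactly \eqref{eq:eq}. Since the equation is linear and homogeneous, the constant prefactor plays no role here. For the boundary condition \eqref{eq:bc} I would insert $J_p(z)\sim\Gamma(p+1)^{-1}(z/2)^p$ as $z\to0$: because $z^p\propto r^{cp}=r^{\nu}$, the prefactor $r^{1/2}$ yields $f(r)\sim r^{\nu+1/2}$, and the normalizing constant $\Gamma(p+1)(\tfrac{2-\mu}{\sqrt\alpha})^p$ is precisely what cancels $\Gamma(p+1)^{-1}(b/2)^p=\Gamma(p+1)^{-1}(\tfrac{\sqrt\alpha}{2-\mu})^p$, giving the limit $1$.

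The only step demanding genuine care is the asymptotics \eqref{eq:asymp}, and this is where I would expect to have to check each exponent and sign. Using $J_p(z)\sim\sqrt{2/(\pi z)}\,\cos(z-\tfrac{p\pi}2-\tfrac\pi4)$ as $z\to\infty$, the factor $z^{-1/2}\propto r^{-c/2}=r^{-(2-\mu)/4}$ combines with $r^{1/2}$ to give exactly $r^{\mu/4}$, which explains the normalization $r^{-\mu/4}f(r)$. The phase then requires the conversion $\cos(\theta-\tfrac\pi4)=\sin(\theta+\tfrac\pi4)$, which turns the cosine into $\sin(z-\tfrac{p\pi}2+\tfrac\pi4)$; since $\tfrac{p\pi}2=\tfrac{\pi\nu}{2-\mu}$ this is precisely the phase in \eqref{eq:asymp}. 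Assembling the amplitude, $\Gamma(p+1)(\tfrac{2-\mu}{\sqrt\alpha})^p\cdot\sqrt{2/(\pi b)}=\pi^{-1/2}\Gamma(p+1)(\tfrac{2-\mu}{\sqrt\alpha})^{p+1/2}$, reproduces the stated constant $C$ (where the displayed $2-\nu$ is evidently a misprint for $2-\mu$). The main obstacle is thus not conceptual but purely a matter of tracking the phase shift and the product of constants without dropping a factor; once the trigonometric identity and the exponent arithmetic are verified, the three claims follow at once.
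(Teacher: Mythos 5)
Your proof is correct and rests on the same mechanism as the paper's: the substitution $z=\tfrac{2\sqrt\alpha}{2-\mu}\,r^{(2-\mu)/2}$ reducing everything to Bessel's equation of order $p=\tfrac{2\nu}{2-\mu}$, combined with the standard small- and large-argument asymptotics of $J_p$. The only structural difference is the direction of the argument: the paper takes an \emph{arbitrary} solution of the initial value problem \eqref{eq:eq}--\eqref{eq:bc}, shows the transformed function must be a linear combination of $J_p$ and $Y_p$, and uses the behavior at the origin (where $Y_p$ blows up, resp.\ is logarithmic for $p=0$) to identify it with the stated multiple of $J_p$, whereas you verify the given formula directly, quoting the Lommel-type identity for $r^a Z_p(br^c)$ in place of the paper's ``short computation.'' For the lemma as stated your forward verification is fully sufficient --- the uniqueness the paper establishes is extra information not claimed in the statement --- and your bookkeeping checks out: $(bc)^2=\alpha$, $2c-2=-\mu$, $a^2-c^2p^2=\tfrac14-\nu^2$ give \eqref{eq:eq}; the cancellation of $\Gamma(p+1)^{-1}(\sqrt\alpha/(2-\mu))^p$ against the prefactor gives \eqref{eq:bc}; and $\cos(\theta-\tfrac\pi4)=\sin(\theta+\tfrac\pi4)$ converts the cosine form of \cite[(9.2.1)]{AbSt} into the sine form with phase $z-\tfrac{\pi\nu}{2-\mu}+\tfrac\pi4$. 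You are also right that $(2-\nu)$ in the displayed constant is a misprint for $(2-\mu)$: the computation yields $C=\pi^{-1/2}\,\Gamma(p+1)\left((2-\mu)/\sqrt\alpha\right)^{p+1/2}$, and indeed the claimed positivity of $C$ would fail for $\nu\geq 2$ if the statement were read literally.
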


\begin{proof}
We will show that the function $f$ given in \eqref{eq:sol} is the unique solution of the initial value problem \eqref{eq:eq} -- \eqref{eq:bc}. Then the asymptotics \eqref{eq:asymp} follow from the asymptotics
$$
J_{\tilde\nu} (s) = \sqrt{\frac2{\pi s} } \left( \sin\left(s-\frac{\pi\tilde\nu}{2} + \frac\pi4 \right) + o(1) \right) \,,
\qquad s\to\infty\,,
$$
of Bessel functions \cite[(9.2.1)]{AbSt}.

Let $f$ denote any solution of the initial value problem \eqref{eq:eq} -- \eqref{eq:bc} and define $g$ by
$$
f(r) =: r^{1/2} g(b r^{\frac{2-\mu}2} ) \,,
\qquad
b:=\frac{2\sqrt\alpha}{2-\mu} \,.
$$
This definition is motivated by the asymptotics \eqref{eq:asymp}: If we want to arrive at a function which behaves asymptotically like an inverse square root times an oscillating function -- which is the asymptotic behavior of any Bessel function --, then \eqref{eq:asymp} suggests to look at $r^{-1/2}f(r)$ as a function of $r^{\frac{2-\mu}2}$. This is essentially what we call $g$, and we are about to show that $g$ is indeed a Bessel function.

A short computation shows that equation \eqref{eq:eq} in terms of $g$ becomes
$$
g'' + s^{-1}g' - \left(\frac{2\nu}{2-\mu}\right)^2 s^{-2} g + g = 0 \,,
$$
which is Bessel's equation with parameter $\tilde\nu:=2\nu/(2-\mu)$. Hence $g$ is a linear combination of $J_{\tilde\nu}$ and $Y_{\tilde\nu}$. Boundary condition \eqref{eq:bc} in terms of $g$ becomes
$$
\lim_{s\to0} s^{-\tilde\nu} g(s) = b^{-\tilde\nu} \,.
$$
Since $J_{\tilde\nu} (s) \sim (s/2)^{\tilde\nu}/\Gamma(\tilde\nu+1)$ for all $\tilde\nu\geq 0$ and $Y_{\tilde\nu}(s) \sim -(1/\pi)\Gamma(\tilde\nu) (s/2)^{-\tilde\nu}$ for $\tilde\nu>0$, resp. $Y_0(s) \sim (2/\pi)\ln s$ as $s\to 0$ \cite[(9.1.7--9)]{AbSt}, we conclude that
$$
g(s) = \Gamma(\tilde\nu+1) (2/b)^{\tilde\nu} J_{\tilde\nu} (s) \,.
$$
This proves the lemma.
\end{proof}

\subsection*{Acknowledgements}

The author wishes to thank R. Seiringer for useful discussions. Support through DFG grant FR 2664/1-1 and U.S. NSF grant PHY 06 52854 is gratefully acknowledged.

%%%%%%%%%%%%%%%%%%%%%%%%%%%%%%%%%%%%%%%%%%%%%%%%%%%%%%%%%%%%%%%%%%%%%%%%%%%%%%%%%%%%%%%%%%%

\bibliographystyle{amsalpha}

\end{document}